%
%
%
%
%
%

\RequirePackage{fix-cm}
\documentclass[smallcondensed,envcountsame]{svjour3} 
%

\smartqed  
%
\usepackage{graphicx, amssymb}
\usepackage{times,a4wide,mathrsfs}
\usepackage{amsmath}
\usepackage{amsfonts}


\newcommand{\C}{\mathbb{C}}

\newcommand{\QQ}{\mathbb{Q}}
\newcommand{\NN}{\mathbb{N}}
\newcommand{\PP}{\mathbb{P}}
\newcommand{\LLL}{\mathbb{L}}

\newcommand{\MM}{\mathcal M}

\newcommand{\coker}{\hbox{Coker\,}}

\newcommand{\ima}{\hbox{Im}}
\newcommand{\rom}{\romannumeral}

\makeatletter
\newcommand*{\da@rightarrow}{\mathchar"0\hexnumber@\symAMSa 4B }
\newcommand*{\da@leftarrow}{\mathchar"0\hexnumber@\symAMSa 4C }
\newcommand*{\xdashrightarrow}[2][]{%
  \mathrel{%
    \mathpalette{\da@xarrow{#1}{#2}{}\da@rightarrow{\,}{}}{}%
  }%
}
\newcommand{\xdashleftarrow}[2][]{%
  \mathrel{%
    \mathpalette{\da@xarrow{#1}{#2}\da@leftarrow{}{}{\,}}{}%
  }%
}
\newcommand*{\da@xarrow}[7]{%
  \sbox0{$\ifx#7\scriptstyle\scriptscriptstyle\else\scriptstyle\fi#5#1#6\m@th$}%
  \sbox2{$\ifx#7\scriptstyle\scriptscriptstyle\else\scriptstyle\fi#5#2#6\m@th$}%
  \sbox4{$#7\dabar@\m@th$}%
  \dimen@=\wd0 %
  \ifdim\wd2 >\dimen@
    \dimen@=\wd2 %
  \fi
  \count@=2 %
  \def\da@bars{\dabar@\dabar@}%
  \@whiledim\count@\wd4<\dimen@\do{%
    \advance\count@\@ne
    \expandafter\def\expandafter\da@bars\expandafter{%
      \da@bars
      \dabar@ 
    }%
  }%
  \mathrel{#3}%
  \mathrel{%
    \mathop{\da@bars}\limits
    \ifx\\#1\\%
    \else
      _{\copy0}%
    \fi
    \ifx\\#2\\%
    \else
      ^{\copy2}%
    \fi
  }%
  \mathrel{#4}%
}
\makeatother

\newtheorem{convention}{Conventions}

\newtheorem{nonumbering}{Theorem}

\newtheorem{sublemma}[theorem]{Sublemma}

\newtheorem{resume}{R\'esum\'e}

%
 
 \journalname{}

\begin{document}

\title{A remark on the motive of the Fano variety of lines of a cubic}

\author{Robert Laterveer}

\institute{CNRS - IRMA, Universit\'e de Strasbourg \at
              7 rue Ren\'e Descartes \\
              67084 Strasbourg cedex\\
              France\\
              \email{laterv@math.unistra.fr}   }

\date{Received: date / Accepted: date}

\maketitle

\begin{abstract} Let $X$ be a smooth cubic hypersurface, and let $F$ be the Fano variety of lines on $X$. We establish a relation between the Chow motives of $X$ and $F$.
This relation implies in particular that if $X$ has finite--dimensional motive (in the sense of Kimura), then $F$ also has finite--dimensional motive. This proves finite--dimensionality for motives of Fano varieties of cubics of dimension $3$ and $5$, and of certain cubics in other dimensions. 
\end{abstract}

\begin{resume}\normalfont Soit $X$ une hypersurface cubique lisse, et soit $F$ la vari\'et\'e de Fano param\'etrant les droites contenues dans $X$. On \'etablit une relation entre les motifs de Chow de $X$ et de $F$. 
Cette relation implique le fait que $F$ a motif de dimension finie (au sens de Kimura) \`a condition que $X$ a motif de dimension finie. En particulier, si $X$ est une cubique lisse de dimension $3$ ou $5$, alors $F$ a motif de dimension finie.
\end{resume}

\keywords{Algebraic cycles \and Chow groups \and motives \and finite--dimensional motives \and cubics \and Fano variety of lines}

\subclass{14C15, 14C25, 14C30, 14J70, 14N25}

\section{Introduction}

The notion of finite--dimensional motive, developed independently by Kimura and O'Sullivan \cite{Kim}, \cite{An}, \cite{MNP}, \cite{J4}, \cite{Iv} has given important new impetus to the study of algebraic cycles. To give but one example: thanks to this notion, we now know the Bloch conjecture is true for surfaces of geometric genus zero that are rationally dominated by a product of curves \cite{Kim}. It thus seems worthwhile to find concrete examples of varieties that have finite--dimensional motive, this being (at present) one of the sole means of arriving at a satisfactory understanding of Chow groups. 

The present note aims to contribute something to the list of examples of varieties with finite--dimensional motive, by considering Fano varieties of lines of smooth cubics 
over $\C$. The main result is as follows:

\begin{nonumbering}[=theorem \ref{main}]  Let $X\subset\PP^{n+1}(\C)$ be a smooth cubic hypersurface, and let $F(X)$ denote the Fano variety of lines on $X$. If $X$ has finite--dimensional motive, then also $F(X)$ has finite--dimensional motive.
\end{nonumbering}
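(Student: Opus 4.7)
The natural strategy is to exploit the Galkin--Shinder relation, which in the Grothendieck ring of complex varieties reads
\[
  [X^{[2]}] \;=\; [X]\cdot[\PP^n] \;+\; \LL^2\cdot [F(X)],
\]
where $X^{[2]}$ is the Hilbert scheme of length--two subschemes of $X$. The plan is to promote this identity to an actual decomposition of Chow motives
\[
  h(X^{[2]}) \;\cong\; h(X)\otimes h(\PP^n) \;\oplus\; h(F(X))(-2),
\]
and then to combine it with well-known stability properties of finite--dimensional motives.

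The geometric input underlying the Galkin--Shinder identity is the ``residual intersection'' rational map $\phi\colon X^{[2]}\dashrightarrow X$ sending a length--two subscheme $Z$ to the third point of intersection of the line $\langle Z\rangle$ with $X$. This map is defined away from the locus $P\subset X^{[2]}$ of length--two schemes supported on a line contained in $X$; this locus is a $\PP^2$-bundle over $F(X)$. Resolving $\phi$ (one blows up $P$ inside $X^{[2]}$) yields a smooth projective variety $\widetilde{X^{[2]}}$ mapping to both $X$ and $X^{[2]}$, from which one extracts an explicit correspondence realizing the desired motivic decomposition. To carry out Step~1 rigorously one constructs mutually orthogonal projectors on $h(X^{[2]})$ out of the graph of the resolved map and the exceptional divisor, and checks that the corresponding summands are isomorphic to $h(X)\otimes h(\PP^n)$ and $h(F(X))(-2)$ respectively.

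Once the motivic Galkin--Shinder identity is in hand, finite--dimensionality follows by standard manipulations. If $h(X)$ is finite--dimensional then so is $h(X\times X)$ (tensor product), hence so is the symmetric quotient $h(\mathrm{Sym}^2 X)$, and finally so is $h(X^{[2]})$ via the blow-up formula applied to the resolution $X^{[2]}\to\mathrm{Sym}^2 X$ of the diagonal (or, equivalently, by taking $S_2$-invariants of the blow-up of $X\times X$ along $\Delta_X$). Since $h(\PP^n)=\bigoplus_{i=0}^n\mathbf{1}(-i)$ is obviously finite--dimensional, the summand $h(F(X))(-2)$ of $h(X^{[2]})$ is finite--dimensional too, and Tate twisting preserves this property; hence $h(F(X))$ is finite--dimensional.

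The main obstacle is Step~1: the Galkin--Shinder identity in $K_0(\mathrm{Var}_\C)$ uses only scissor relations, which in general do \emph{not} lift to the category of Chow motives. One must therefore upgrade the piecewise stratification underlying their proof to an explicit correspondence-level argument. Concretely, this requires identifying the Chow motives of the exceptional divisor of $\widetilde{X^{[2]}}\to X^{[2]}$ (a projective bundle over $P$, hence over $F(X)$) and of the fibers of the morphism $\widetilde{X^{[2]}}\to X$ (which are essentially symmetric squares of $\PP^n$), and checking that all the relevant idempotents and compatibilities hold on the level of algebraic cycles modulo rational equivalence, not merely up to numerical or even homological equivalence.
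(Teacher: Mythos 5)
Your overall strategy coincides with the paper's: lift the Galkin--Shinder relation to an isomorphism of Chow motives $h(X^{[2]})\cong\bigoplus_{i=0}^n h(X)(i)\oplus h(F)(2)$, and then conclude by standard stability properties of finite-dimensionality (products, the fact that $h(X^{[2]})$ is a summand of the motive of the blow-up of $X\times X$ along the diagonal via $S_2$-invariants --- your parenthetical version is the correct one, since $\mathrm{Sym}^2X$ is singular and the blow-up formula cannot be applied to it directly --- direct summands, and Tate twists). That second half is fine. The problem is that the first half, which is the entire content of the theorem, is exactly what you do not prove: you announce the motivic decomposition as ``the plan'', say that ``one constructs mutually orthogonal projectors \dots and checks'' the required identities, and then in your closing paragraph you yourself identify this lift from $K_0(\mathrm{Var}_\C)$ to $\MM_{\rm rat}$ as the main obstacle, without resolving it. No correspondence is written down, no idempotency or orthogonality is verified, and no isomorphism of summands is established; as it stands this is a programme rather than a proof.

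For comparison, the paper closes this gap using a geometric input your sketch omits. The natural target of the Galkin--Shinder map is not $X$ but the $\PP^n$-bundle $W\to X$ (one must remember the line as well as the residual point), and by Voisin the resolution $Y$ of $\phi\colon X^{[2]}\dashrightarrow W$ carries two blow-up structures: it is the blow-up of $X^{[2]}$ along a codimension-$2$ center $Z$ which is a $\PP^2$-bundle over $F$, and simultaneously the blow-up of $W$ along a codimension-$3$ center $Z'$ which is a $\PP^1$-bundle over $F$, compatibly over $F$. Comparing the two blow-up exact sequences for $A^\ast(Y)$ and running a snake-lemma argument produces an isomorphism $A^{j-2}(F)\oplus A^j(W)\xrightarrow{\cong}A^j(X^{[2]})$ given by explicit correspondences ($\tau_\ast p^\ast$ and $\phi^\ast$); repeating the argument after taking the product with an arbitrary smooth projective $M$ and invoking Manin's identity principle then yields the motivic isomorphism, with no need to exhibit orthogonal projectors at all. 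Without the second blow-up structure (or an equally concrete substitute), your proposed projector construction has nothing to compute with --- also note that the fibres of the residual map are generically $\PP^n$'s, not symmetric squares of $\PP^n$ --- so the missing step is genuine and is precisely where all the work of the paper lies.
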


In particular, this implies that for smooth cubics $X$ of dimension $3$ or $5$, the Fano variety $F(X)$ has finite--dimensional motive. In the first case, the dimension of $F(X)$ is $2$, while in the second case it is $6$. The case $n=3$ is also proven (in a different way) in \cite{Diaz}. Some more examples where theorem \ref{main} applies are given in corollary \ref{ex}. 

Theorem \ref{main} follows from a more general result. This more general result relates the Chow motives of $X$ and $F=F(X)$ for any smooth cubic:

\begin{nonumbering}[=theorem \ref{main2}] Let $X\subset\PP^{n+1}(\C)$ be a smooth cubic hypersurface. Let $F:=F(X)$ denote the Fano variety of lines on $X$, and let $X^{[2]}$ denote the second Hilbert scheme of $X$. There is an isomorphism of Chow motives
  \[ h(F)(2)\oplus\bigoplus_{i=0}^n h(X)(i)\cong h(X^{[2]})\ \ \ \hbox{in}\ \MM_{\rm rat}\ .\]
\end{nonumbering}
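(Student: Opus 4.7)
The plan is to adapt to Chow motives the geometric argument of Galkin--Shinder, who proved the analogous identity
\[
[X^{[2]}] = (1 + \LL + \cdots + \LL^n)[X] + \LL^2 [F]
\]
in the Grothendieck ring $K_0(\mathrm{Var}/\C)$. The geometric input is the rational ``residual point'' map $\phi \colon X^{[2]} \dashrightarrow X$: given a length-2 subscheme $Z \subset X$ whose spanning line $\ell_Z$ is not contained in $X$, the intersection $\ell_Z \cap X$ is a length-3 divisor on $\ell_Z$ containing $Z$, and $\phi(Z)$ is defined to be the residual third point.

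First I would identify the indeterminacy locus $W \subset X^{[2]}$ of $\phi$, namely those $Z$ whose spanning line lies in $X$. Via the universal family of lines $P \to F$ one has $W \cong \mathrm{Hilb}^2(P/F)$; this exhibits $W$ as a smooth codimension-two subvariety of $X^{[2]}$ and a $\PP^2$-bundle over $F$ (using $\mathrm{Hilb}^2 \PP^1 \cong \PP^2$), so by the projective bundle formula $h(W) \cong h(F) \oplus h(F)(1) \oplus h(F)(2)$. Blowing up $W$ in $X^{[2]}$ yields $\sigma \colon \widetilde{X^{[2]}} \to X^{[2]}$, along which $\phi$ extends to a morphism $\widetilde\phi \colon \widetilde{X^{[2]}} \to X$; the motivic blowup formula gives
\[
h(\widetilde{X^{[2]}}) \cong h(X^{[2]}) \oplus h(F)(1) \oplus h(F)(2) \oplus h(F)(3).
\]
Analyzing $\widetilde\phi$ (possibly after further blowups to uniformize its fibers) should yield a second decomposition of $h(\widetilde{X^{[2]}})$ purely in terms of $h(X)$ and $h(F)$; equating the two decompositions and cancelling common summands---which, at the level of motives, requires an explicit construction of mutually orthogonal idempotent correspondences---produces the claimed isomorphism.

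The main obstacle is the analysis of $\widetilde\phi$. The generic fiber of $\phi$ over $x \in X$ is $\PP^n \setminus F_x$, where $F_x$ denotes the Fano variety of lines in $X$ through $x$; since the dimension and local structure of $F_x$ vary with $x$, $\widetilde\phi$ is unlikely to be a Zariski-locally trivial projective bundle. Extracting the precise motivic identity---whose Grothendieck-ring shadow differs from the naive blowup-plus-projective-bundle calculation by a term of $\LL^2 [F]$---will likely require either additional blowups (of $X$, or of $\widetilde{X^{[2]}}$ along further loci) to force a uniform bundle structure, or a direct construction of mutually orthogonal projectors in $\mathrm{CH}^*(X^{[2]} \times X^{[2]})$ built from the graph of $\phi$ and from the incidence $W \hookrightarrow X^{[2]} \times F$. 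This correspondence-based formulation is perhaps the cleanest route to the motivic isomorphism, since it bypasses the need for a uniform fibration and instead directly produces each direct summand $h(X)(i)$ and $h(F)(2)$.
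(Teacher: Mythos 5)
Your setup is on the right track --- the residual-point construction, the identification of the indeterminacy locus as a smooth codimension--$2$ subvariety which is a $\PP^2$--bundle over $F$, and the blow--up formula for $h(\widetilde{X^{[2]}})$ all agree with the actual argument --- but the two steps where you stop are exactly where the mathematical content lies, and as stated they do not go through. First, by mapping only to the residual point you lose the key structural fact: if one remembers the line as well, sending $x+y$ to the pair (residual point $z$, line $\ell_{xy}$), one gets a map to the $\PP^n$--bundle $W\to X$ of pointed lines, and this map is \emph{birational} (the inverse takes $(z,\ell)$ with $\ell\not\subset X$ to the residual length--$2$ scheme of $z$ in $\ell\cap X$). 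By a result of Voisin, the single blow--up $Y$ of $X^{[2]}$ along your $\PP^2$--bundle resolves this birational map, and moreover $Y\to W$ is itself a blow--up along a codimension--$3$ center which is a $\PP^1$--bundle over $F$, with both exceptional--divisor projections compatible over $F$. This replaces your problematic fibration over $X$ with fibers $\PP^n\setminus F_x$ (and the vague ``further blowups to uniformize'') by a clean pair of blow--up descriptions of one and the same variety $Y$; without this input your second decomposition of $h(\widetilde{X^{[2]}})$ is not obtained.

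Second, even granting both decompositions, ``equating the two decompositions and cancelling common summands'' is not a legitimate operation in $\MM_{\rm rat}$: the category is not known to be Krull--Schmidt, so an isomorphism $A\oplus C\cong B\oplus C$ does not imply $A\cong B$, and you acknowledge but do not supply the orthogonal idempotents that would be needed. The paper avoids cancellation altogether: it writes down the explicit candidate correspondence $\Gamma_\tau\circ{}^t\Gamma_p\oplus{}^t\bar\Gamma_\phi\colon h(F)(2)\oplus h(W)\to h(X^{[2]})$, proves it induces isomorphisms on Chow groups of $X^{[2]}\times M$ for \emph{every} smooth projective $M$ (this is a snake--lemma comparison of the two blow--up exact sequences for $Y$, using the compatibility over $F$ to compute the connecting map $\gamma$ between the Chow groups of the two centers, together with the elementary fact that $\phi^\ast$ is surjective on curve classes), and then concludes by Manin's identity principle; the projective bundle formula for $W\to X$ finishes the statement. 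So your proposal is a plausible plan whose two load-bearing steps --- the second blow--up structure on the resolution, and a cancellation-free mechanism for extracting the isomorphism --- remain unproved.
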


This relation of Chow motives is inspired by (and formally similar to) a relation between $X$ and $F$ in the Grothendieck ring of varieties that was discovered by Galkin--Shinder \cite{GS} (cf. remark \ref{k0}).

\vskip0.4cm

\begin{convention} All varieties will be projective irreducible varieties over $\C$.

All Chow groups will be with rational coefficients: for $X$ smooth of dimension $n$, we will write $A^j(X)=A_{n-j}(X)$ for the Chow group 
of codimension $j$ cycles with $\QQ$--coefficients modulo rational equivalence. We will write $A^j_{hom}(X)$ and $A^j_{AJ}(X)$ for the subgroups of homologically trivial (resp. Abel--Jacobi trivial) cycles.

The category $\MM_{\rm rat}$  will denote the contravariant category of pure motives with respect to rational equivalence, as in \cite{Sch}, \cite{MNP}. For a morphism $f\colon X\to Y$ between smooth varieties, we will write $\Gamma_f\in A^{\dim Y}(X\times Y)$ for the graph of $f$.
\end{convention}

\section{Finite--dimensionality}

We refer to \cite{Kim}, \cite{An}, \cite{MNP}, \cite{Iv}, \cite{J4} for basics on the notion of finite--dimensional motive. 
An essential property of varieties with finite--dimensional motive is embodied by the nilpotence theorem:

\begin{theorem}[Kimura \cite{Kim}]\label{nilp} Let $X$ be a smooth projective variety of dimension $n$ with finite--dimensional motive. Let $\Gamma\in A^n(X\times X)_{}$ be a correspondence which is numerically trivial. Then there is $N\in\NN$ such that
     \[ \Gamma^{\circ N}=0\ \ \ \ \in A^n(X\times X)_{}\ .\]
\end{theorem}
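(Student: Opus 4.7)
The plan is to exploit the defining decomposition of a finite--dimensional motive. By Kimura's definition, one has a splitting $h(X)=M^+\oplus M^-$ in $\MM_{\rm rat}$ with $M^+$ evenly finite--dimensional ($\wedge^{N^+}M^+=0$) and $M^-$ oddly finite--dimensional ($\mathrm{Sym}^{N^-}M^-=0$). The correspondence $\Gamma$, viewed as an endomorphism of $h(X)$, decomposes as $\Gamma=\sum_{a,b\in\{+,-\}}\Gamma^{ab}$ with $\Gamma^{ab}\colon M^b\to M^a$. It suffices to show that each component, or a small composition thereof, becomes zero after finitely many iterations.

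The technical core is the following lemma: if $M$ is evenly finite--dimensional with $\wedge^{N+1}M=0$ and $f\in\mathrm{End}_{\MM_{\rm rat}}(M)$ is numerically trivial, then $f^{\circ N}=0$, and analogously in the odd case. I would prove this by a categorical Cayley--Hamilton argument. The hypothesis $\wedge^{N+1}M=0$ yields an identity
\[ f^{\circ N} - e_1(f)\, f^{\circ(N-1)} + \dots + (-1)^N e_N(f)\cdot \mathrm{id}_M = 0 \]
in $\mathrm{End}(M)$, where each $e_k(f)\in\QQ$ is the categorical trace of $\wedge^k f$ acting on $\wedge^k M$. Newton's identities express $e_k(f)$ as a universal polynomial in the power sums $p_j(f)=\mathrm{tr}(f^{\circ j})$. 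Since $f$ is numerically trivial, so is each $f^{\circ j}$; and under the identification of the categorical trace on $\MM_{\rm rat}$ with the intersection number against the diagonal, all $p_j(f)$ vanish. Hence $e_k(f)=0$ for $1\le k\le N$, and the identity collapses to $f^{\circ N}=0$.

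Granted the lemma, the theorem reduces to bookkeeping. The diagonal components $\Gamma^{++}$ and $\Gamma^{--}$ are numerically trivial endomorphisms of $M^+$ and $M^-$ respectively, hence nilpotent. For the mixed components, the compositions $\Gamma^{+-}\circ\Gamma^{-+}\in\mathrm{End}(M^+)$ and $\Gamma^{-+}\circ\Gamma^{+-}\in\mathrm{End}(M^-)$ are numerically trivial and therefore nilpotent by the same lemma; iterating $\Gamma$ sufficiently many times forces all four contributions to vanish simultaneously, producing $\Gamma^{\circ N'}=0$ for some $N'\in\NN$ which can be taken roughly of the order of $N^++N^-$.

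The step I expect to be most delicate is the identification of the abstract categorical trace on the rigid tensor category $(\MM_{\rm rat},\otimes)$ with the concrete intersection number $\Gamma\cdot\Delta_X$ on $X\times X$; without this bridge, numerical triviality of $f^{\circ j}$ cannot be translated into the vanishing of the power sums $p_j(f)$ that feed Newton's identities. Once this compatibility and the standard characterisation of numerical equivalence by the intersection pairing are in place, the combinatorics of Newton's formulas and the super--Cayley--Hamilton identity close the argument formally.
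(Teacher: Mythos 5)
First, on context: the paper does not prove this statement at all --- it is quoted from Kimura \cite{Kim} --- so what you are really doing is reconstructing Kimura's original argument. Your central lemma is indeed the heart of that argument, and your sketch of it is sound: $\wedge^{N+1}M=0$ gives the categorical Cayley--Hamilton identity with coefficients $e_k(f)=\mathrm{tr}(\wedge^k f)$, Newton's identities (in characteristic $0$) express these in the power sums $p_j(f)=\mathrm{tr}(f^{\circ j})$, the categorical trace of a self-correspondence is the degree of its intersection with the diagonal, and since numerically trivial correspondences form a two-sided ideal under composition, all $p_j(f)$ vanish when $f$ is numerically trivial; hence $f^{\circ N}=0$, and similarly with $\mathrm{Sym}$ in the odd case. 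Up to this point you are faithfully reproducing Kimura.

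The gap is in your final ``bookkeeping'' paragraph. Write $a=\Gamma^{++}$, $b=\Gamma^{+-}$, $c=\Gamma^{-+}$, $d=\Gamma^{--}$. The expansion of $\Gamma^{\circ m}$ is a sum over all parity-admissible words in the non-commuting letters $a,b,c,d$, and knowing that the four elements $a$, $d$, $b\circ c$, $c\circ b$ are individually nilpotent does not force long mixed words such as $a\circ b\circ d\circ c\circ a\circ b\circ d\circ c\circ\cdots$ (let alone sums of such words) to vanish: in the free algebra subject only to $a^{p}=d^{q}=(bc)^{r}=(cb)^{s}=0$, the matrix $\bigl(\begin{smallmatrix} a & b\\ c & d\end{smallmatrix}\bigr)$ is not nilpotent, so ``iterating $\Gamma$ sufficiently many times'' is not an argument. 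What one actually proves (and what Kimura proves) is the stronger statement that the whole kernel $I=\ker\bigl(\mathrm{End}_{\MM_{\rm rat}}(h(X))\to\mathrm{End}_{\rm num}\bigr)$ is a \emph{nilpotent ideal}, and two extra inputs are needed for that. (1) Your trace identity, multilinearized, only shows that on $M^{\pm}$ the numerically trivial endomorphisms form a nil algebra of bounded exponent (the \emph{symmetrized} product of $N+1$ such endomorphisms vanishes); to kill arbitrary ordered products --- which is what controls sums of words --- one invokes the Nagata--Higman theorem (a nil $\QQ$-algebra of bounded exponent is nilpotent) or an equivalent combinatorial substitute. (2) Once the two ``corner'' ideals $\pi^{+}I\pi^{+}$ and $\pi^{-}I\pi^{-}$ are known to be nilpotent, a parity/word-length argument (a sufficiently long admissible word either passes through $M^{+}$ often, hence factors as a long product inside $\pi^{+}I\pi^{+}$, or contains a long run inside $\pi^{-}I\pi^{-}$) yields $I^{N'}=0$, with $N'$ on the order of the \emph{product} of the two indices --- so your quantitative claim $N'\approx N^{+}+N^{-}$ is also off. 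With these two repairs your outline becomes a complete proof; without them the last step genuinely fails.
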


 Actually, the nilpotence property (for all powers of $X$) could serve as an alternative definition of finite--dimensional motive, as shown by a result of Jannsen \cite[Corollary 3.9]{J4}.
   Conjecturally, all smooth projective varieties have finite--dimensional motive \cite{Kim}. We are still far from knowing this, but at least there are quite a few non--trivial examples:
 
\begin{remark} 
The following varieties have finite--dimensional motive: abelian varieties, varieties dominated by products of curves \cite{Kim}, $K3$ surfaces with Picard number $19$ or $20$ \cite{P}, surfaces not of general type with $p_g=0$ \cite[Theorem 2.11]{GP}, certain surfaces of general type with $p_g=0$ \cite{GP}, \cite{PW}, \cite{V8}, Hilbert schemes of surfaces known to have finite--dimensional motive \cite{CM}, generalized Kummer varieties \cite[Remark 2.9(\rom2)]{Xu},
 threefolds with nef tangent bundle \cite{Iy} (an alternative proof is given in \cite[Example 3.16]{V3}), fourfolds with nef tangent bundle \cite{Iy2}, log--homogeneous varieties in the sense of \cite{Br} (this follows from \cite[Theorem 4.4]{Iy2}), certain threefolds of general type \cite[Section 8]{V5}, varieties of dimension $\le 3$ rationally dominated by products of curves \cite[Example 3.15]{V3}, varieties $X$ with $A^i_{AJ}(X)_{}=0$ for all $i$ \cite[Theorem 4]{V2}, products of varieties with finite--dimensional motive \cite{Kim}.
\end{remark}

\begin{remark}
It is an embarassing fact that up till now, all examples of finite-dimensional motives happen to lie in the tensor subcategory generated by Chow motives of curves, i.e. they are ``motives of abelian type'' in the sense of \cite{V3}. On the other hand, there exist many motives that lie outside this subcategory, e.g. the motive of a very general quintic hypersurface in $\PP^3$ \cite[7.6]{Del}.
\end{remark}

\section{Main theorem}

This section contains the proof of the main result of this note, as announced in the introduction:

\begin{theorem}\label{main} Let $X\subset\PP^{n+1}(\C)$ be a smooth cubic hypersurface, and let $F:=F(X)$ denote the Fano variety of lines on $X$. If $X$ has finite--dimensional 
motive (resp. motive of abelian type), then $F$ has finite--dimensional motive (resp. motive of abelian type).
\end{theorem}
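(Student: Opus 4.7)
The plan is to reduce Theorem \ref{main} to Theorem \ref{main2} by invoking standard closure properties. Both the class of finite--dimensional motives and the class of motives of abelian type are stable under direct sums, tensor products, direct summands, and Tate twists; in particular, if $M\oplus N$ lies in one of these classes, so does $M$. (The Tate--twist stability reflects the fact that $\LL$ is a summand of $h(\PP^1)$, hence evenly finite--dimensional and of abelian type.)

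The first step is to show that $X^{[2]}$ has finite--dimensional motive (resp.\ motive of abelian type) whenever $X$ does. For $n=\dim X\ge 2$ one has an isomorphism $X^{[2]}\cong \widetilde{X\times X}/S_2$, where $\widetilde{X\times X}:=\mathrm{Bl}_{\Delta_X}(X\times X)$ carries the natural lift of the swap involution $\sigma$. Manin's blow--up formula gives
\[
h(\widetilde{X\times X})\;\cong\; h(X\times X)\oplus\bigoplus_{i=1}^{n-1}h(X)(i)\quad\hbox{in }\MM_{\rm rat},
\]
and $h(X^{[2]})$ is the direct summand cut out by the symmetrization projector $\tfrac12(\Gamma_{\ide}+\Gamma_\sigma)$. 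Finite--dimensionality (resp.\ abelian type) of $h(X)$ propagates through tensor products and Tate twists to $h(\widetilde{X\times X})$, and thence to its summand $h(X^{[2]})$. (The case $n=1$ is handled directly via $X^{[2]}=\mathrm{Sym}^2 X$.)

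The conclusion is then immediate from Theorem \ref{main2}, which exhibits $h(F)(2)$ as a direct summand of $h(X^{[2]})$ complementary to $\bigoplus_{i=0}^{n} h(X)(i)$. Hence $h(F)(2)$, and therefore $h(F)$, inherits the relevant property from $h(X^{[2]})$.

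In this reduction almost no genuine algebraic--geometric work is done; all the substance sits in Theorem \ref{main2}. The main obstacle, were one to approach \ref{main2} from first principles, would be the explicit construction of mutually inverse correspondences realizing the stated splitting of $h(X^{[2]})$. I would expect these to arise from the incidence variety of lines and points of $X$, combined with the residual--line construction peculiar to cubic hypersurfaces (a point $x\in X$ and a line $\ell\subset X$ span a plane meeting $X$ in $\ell$ together with a residual conic), providing natural cycle--theoretic maps between $X^{[2]}$, $F$ and $X$ that categorify on the motivic level the Galkin--Shinder identity in $K_0(\mathrm{Var})$ mentioned in the introduction.
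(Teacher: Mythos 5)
Your proposal is correct and follows essentially the same route as the paper: Theorem \ref{main} is deduced from Theorem \ref{main2} by noting that finite--dimensionality (resp.\ abelian type) passes from $X$ to $X^{[2]}$ and is inherited by direct summands, so that $h(F)(2)$, and hence $h(F)$, has the property. Your explicit justification of the step $X\Rightarrow X^{[2]}$ via $X^{[2]}\cong \mathrm{Bl}_{\Delta_X}(X\times X)/S_2$, Manin's blow--up formula and the symmetrization projector is exactly the standard argument the paper leaves implicit.
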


Theorem \ref{main} follows from a more general result:

\begin{theorem}\label{main2} Let $X\subset\PP^{n+1}(\C)$ be a smooth cubic hypersurface. Let $F:=F(X)$ denote the Fano variety of lines on $X$, and let $X^{[2]}$ denote the second Hilbert scheme of $X$. There is an isomorphism of Chow motives
  \[ h(F)(2)\oplus\bigoplus_{i=0}^n h(X)(i)\cong h(X^{[2]})\ \ \ \hbox{in}\ \MM_{\rm rat}\ .\]
  \end{theorem}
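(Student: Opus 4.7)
\medskip\noindent\emph{Proof proposal.}
The plan is to lift the Galkin--Shinder identity in $K_0(\mathrm{Var})$ (cf.\ remark \ref{k0}) to an isomorphism of Chow motives by means of the blow-up formula. Two rational maps organize the geometry. First, the residual intersection map $\phi\colon X^{[2]}\dashrightarrow X$ sends a length-two subscheme $\{p,q\}$ whose span $\overline{pq}$ is not contained in $X$ to the unique third point $r$ of $\overline{pq}\cap X$; its indeterminacy locus is the closed subvariety $P\subset X^{[2]}$ of length-two subschemes supported on a line $\ell\in F$, which is a $\PP^2$-bundle over $F$ of codimension $2$ in $X^{[2]}$. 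Second, let $\PPP\to X$ be the $\PP^n$-bundle whose fiber over $r$ parametrizes lines through $r$ in $\PP^{n+1}$. There is a rational map $\psi\colon\PPP\dashrightarrow X^{[2]}$ sending $(r,\ell)$ with $\ell\not\subset X$ to the residual pair $(\ell\cap X)\setminus\{r\}$, whose indeterminacy locus is the universal line $\LL\subset\PPP$ consisting of pairs $(r,\ell)$ with $\ell\in F$ and $r\in\ell$; this is a $\PP^1$-bundle over $F$, of codimension $3$ in $\PPP$.

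\medskip\noindent The key geometric step is to verify that, writing $\phi'=(\phi,\mathrm{span})\colon X^{[2]}\dashrightarrow\PPP$, the closure of the graph of $\phi'$ inside $X^{[2]}\times\PPP$ is simultaneously isomorphic to $\mathrm{Bl}_P(X^{[2]})$ and to $\mathrm{Bl}_\LL(\PPP)$. This requires a local analysis of the two projections and an identification of the exceptional divisors with the expected projective bundles (respectively a $\PP^1$-bundle over $P$ and a $\PP^2$-bundle over $\LL$, matching the two codimensions). Granting this, Manin's blow-up formula
\[ h(\mathrm{Bl}_Z W)\cong h(W)\oplus\bigoplus_{i=1}^{\mathrm{codim}(Z,W)-1} h(Z)(i) \]
combined with the projective bundle formula applied to $\PPP\to X$, $P\to F$, and $\LL\to F$ yields the equality of motives
\[ h(X^{[2]})\oplus h(F)(1)\oplus h(F)(2)\oplus h(F)(3)\cong \bigoplus_{i=0}^n h(X)(i)\oplus h(F)(1)\oplus h(F)(2)^{\oplus 2}\oplus h(F)(3). \]
Cancelling the common summand formally gives the desired isomorphism $h(X^{[2]})\cong h(F)(2)\oplus\bigoplus_{i=0}^n h(X)(i)$.

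\medskip\noindent The main obstacle is the cancellation step: Krull--Schmidt fails in $\MM_{\rm rat}$, so removing common summands is not automatic. I would address this by keeping careful book-keeping of the explicit correspondences underlying each appearance of $h(F)(j)$ in Manin's formula and in the projective bundle formulas, arranging that the common summand on both sides is realised by identical projectors that can then be subtracted from the identity. A more direct alternative, which avoids the cancellation altogether, would be to construct mutually inverse correspondences between $h(X^{[2]})$ and $h(F)(2)\oplus\bigoplus_{i=0}^n h(X)(i)$ explicitly: on one side using the inclusion $P\hookrightarrow X^{[2]}$ composed with the top Chern class of the relative tautological bundle of $P\to F$, and on the other side using the graph closure of $\phi$ together with the relative hyperplane classes of $\PPP\to X$, and then checking that the two composites equal the identity. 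Executing this verification is the technical heart of the proof.
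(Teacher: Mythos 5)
You have the right geometry: the resolution of $\phi$ with its two blow--up structures, the codimension~$2$ center which is a $\PP^2$--bundle over $F$ and the codimension~$3$ center which is a $\PP^1$--bundle over $F$, is exactly the Galkin--Shinder/Voisin input the paper uses (proposition \ref{gs}), and your book--keeping with the blow--up and projective bundle formulas is correct. But the argument stops precisely where the real work begins. Since Krull--Schmidt fails in $\MM_{\rm rat}$, cancelling $h(F)(1)\oplus h(F)(2)\oplus h(F)(3)$ is not formal, and neither of your two proposed remedies is carried out. Note that the summands to be cancelled on the two sides both come from the \emph{same} exceptional divisor $E\subset Y$, but read through the two different bundle structures ($E\to Z\to F$ versus $E\to Z^\prime\to F$); matching the corresponding projectors is a genuine computation, not book--keeping, and likewise checking that your candidate mutually inverse correspondences compose to the identity is exactly the hard content, which you defer (``the technical heart'') without indicating how it would be done.

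It is worth comparing with how the paper avoids cancellation altogether. It proves directly that $(a,b)\mapsto \tau_\ast p^\ast(a)+\phi^\ast(b)$ is an isomorphism $A^{j-2}(F)\oplus A^j(W)\to A^j(X^{[2]})$, by comparing the two blow--up exact sequences on the common resolution $Y$ via the snake lemma; everything then reduces to identifying the cokernel of an induced map $\gamma\colon A^{j-3}(Z^\prime)\to A^{j-2}(Z)$ with $A^{j-2}(F)$ (lemma \ref{gamma}, sublemma \ref{gamma2}). This identification needs a non--formal input: the class $h_1=f_\ast c_2(G^\prime)\in A^1(Z)$ must have nonzero component along the relative hyperplane class of the $\PP^2$--bundle $Z\to F$, which the paper deduces from the fact that $\phi^\ast$ is automatically an isomorphism on curve classes (both centers have codimension $\geq 2$, so $1$--cycles can be moved off them). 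Because the whole geometric setup is stable under taking products with an arbitrary smooth projective $M$, the same argument gives the isomorphism of Chow groups universally (proposition \ref{step2}), and Manin's identity principle then upgrades it to the motivic isomorphism (proposition \ref{step3}) --- no explicit inverse correspondence and no cancellation of summands is ever needed. So your outline identifies the correct statement and the correct obstacle, but the deferred verification \emph{is} the proof, and your sketch does not yet contain the ingredient (the curve--class argument, or a substitute for it) that makes it go through.
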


\begin{proof} The argument hinges on the following geometric relation between $X$ and $F$, which is specific to cubics:

\begin{proposition}[Galkin--Shinder \cite{GS}, Voisin \cite{V15}]\label{gs} Let $X\subset\PP^{n+1}(\C)$ be a smooth cubic hypersurface, and let $X^{[2]}$ denote its second Hilbert scheme. There exists a birational map
  \[  \phi\colon\ \ X^{[2]}\ \dashrightarrow\ W\ ,\]
  where $W$ is a $\PP^n$--bundle over $X$. The map $\phi$ admits a resolution of indeterminacy
  \[ \begin{array}[c]{ccc}
     & Y &\\
   {}^{\phi_1} \swarrow && \searrow {}^{\phi_2}\\
    X^{[2]}\ \ \ \ \ & \xdashrightarrow{\phi} & \ \ \ \ \ \ W\ .\\
    \end{array}\]
   Here the morphism
$\phi_1\colon Y\to X^{[2]}$ is the blow--up with center $\tau\colon Z\subset X^{[2]}$ of codimension $2$, and $Z$ has the structure of a $\PP^2$--bundle $p\colon Z\to F$. The morphism $\phi_2\colon Y\to W$ is the blow--up with center $\tau^\prime\colon Z^\prime\subset W$ of codimension $3$, and $Z^\prime$ has the structure of a $\PP^1$--bundle $p^\prime\colon Z^\prime\to F$.

Moreover, the diagram
  \[ \begin{array} [c]{ccc}
  & E &\\
   {}^{f} \swarrow && \searrow {}^{f^\prime}\\
    Z\ \ \ \ \ \ & & \ \ \ \ \ \ Z^\prime\ \\
     {}_{p} \searrow && \swarrow {}_{p^\prime}\\
     & F & \\
     \end{array}\]
     commutes, where $E\subset Y$ denotes the exceptional divisor of $\phi_1$ and $\phi_2$, and $f$ (resp. $f^\prime$) denotes the restriction of $\phi_1$ (resp. $\phi_2$) to $E$.
\end{proposition}

\begin{proof} The map $\phi$ is defined in \cite[Proof of Theorem 5.1]{GS}. The existence of the variety $Y$ with two different blow--up structures as indicated is \cite[Proposition 2.9]{V15}.

For the ``Moreover'' part, we inspect the proof of  \cite[Proposition 2.9]{V15}. This proof contains an explicit description of the exceptional divisor $E$ (denoted $Q_{P_2}$ in loc. cit.):
  \[ E =\Bigl\{  (u, x+y, [\ell])\ \vert\ \ell\subset X,\ x+y\in \ell^{[2]},\ u\in\ell\Bigr\}\ ,\]
  where the pair $x+y$ is in $X^{[2]}$ and $\ell$ denotes a line.
  The morphism $f$ sends a triple $(u,x+y,[\ell])$ to the pair $(x+y,[\ell])$. The image $f(E)$ is the locus of length $2$ subschemes $x+y\in X^{[2]}$ contained in a line $\ell$. Thus, $f(E)$ identifies with $Z$ (denoted $P_2$ in loc. cit.), and $p\circ f$ sends $(u,x+y,[\ell])$ to $[\ell]\in F$. The morphism $f^\prime$ (which is $\widetilde{\Phi}$ restricted to $Q_{P_2}$ in the notation of loc. cit.) sends a triple $ (u,x+y,[\ell])$ to the pair $(u,[\ell])$. The image $Z^\prime=f^\prime(E)$ (which is denoted $P$ in loc. cit.) has a $\PP^1$--bundle structure $p\colon Z^\prime\to F$ obtained by sending $(u,[\ell])$ to $[\ell]\in F$. This proves the ``Moreover'' assertion of proposition \ref{gs}.
\end{proof}

We now proceed with the proof of theorem \ref{main2}. As is well--known, a birational map $\phi\colon X^{[2]}\dashrightarrow W$ induces
homomorphisms
  \[  \begin{split} &\phi_\ast\colon\ \   A^j(X^{[2]})\ \to\ A^j(W)\ ,\\
                          & \phi^\ast\colon\ \   A^j(W)\ \to\ A^j(X^{[2]})\ ,\\
                          \end{split}\]
                          defined by the correspondence $\bar{\Gamma_\phi}$ (the closure of the graph of $\phi$) resp. its transpose.
  As a first step, we relate $F$ and $X^{[2]}$ on the level of Chow groups:

\begin{proposition}\label{step1} Let $X\subset\PP^{n+1}(\C)$ be a smooth cubic hypersurface, and let $F=F(X)$ be its Fano variety of lines. The map
  \[   \begin{split} A^{j-2}(F)\oplus A^j(W)\ &\to\ A^j(X^{[2]})\ ,\\ 
               (a,b)\ &\mapsto\  \tau_\ast  p^\ast(a)+  \phi^\ast(b)\\
               \end{split}  \]
  is an isomorphism for all $j$.
  \end{proposition}
  
  \begin{proof}
 It will be convenient to prove proposition \ref{step1} in a more abstract set--up. That is, we forget for the time being that we are dealing with cubics and Fano varieties and we only keep the geometric structure provided by proposition \ref{gs}. In this abstract set--up, we will prove the isomorphism of proposition \ref{step1}:
  
 \begin{proposition}\label{step1a} Let $V$ and $V^\prime$ be smooth projective varieties of dimension $m$. Assume there is a birational map 
   \[ \phi\colon\ \ V\ \dashrightarrow\ V^\prime\ ,\]
   and a
 commutative diagram
      \[ \begin{array}[c]{ccc}
     & Y &\\
   {}^{\phi_1} \swarrow && \searrow {}^{\phi_2}\\
    V \ \ \ \ \ & \xdashrightarrow{\phi} & \ \ \ \ \ \ V^\prime\ ,\\
    \end{array}\]
  where $\phi_1$ is the blow--up with smooth codimension $2$ center $\tau\colon Z\subset V$, and $\phi_2$ is the blow--up with smooth codimension $3$ center $\tau^\prime\colon Z^\prime\subset V^\prime$. Assume moreover there is a commutative diagram
      \[ \begin{array} [c]{ccc}
  & E &\\
   {}^{f} \swarrow && \searrow {}^{f^\prime}\\
    Z\ \ \ \ \ \ & & \ \ \ \ \ \ Z^\prime\ \\
     {}_{p} \searrow && \swarrow {}_{p^\prime}\\
     & F & \\
     \end{array}\]
      where $E$ denotes the exceptional divisor of $\phi_1$ and $\phi_2$, and $f$ (resp. $f^\prime$) denotes the restriction of $\phi_1$ (resp. $\phi_2$) to $E$, and $p$ (resp. $p^\prime$) is a $\PP^2$--bundle (resp. $\PP^1$--bundle) over a smooth projective variety $F$. Then the map
         \[   \begin{split} A^{j-2}(F)\oplus A^j(V^\prime)\ &\to\ A^j(V)\ ,\\ 
               (a,b)\ &\mapsto\  \tau_\ast  p^\ast(a)+  \phi^\ast(b)\\
               \end{split}  \]  
               is an isomorphism for all $j$.  
  \end{proposition}
  
 Proposition \ref{step1} is then the conjunction of propositions \ref{gs} and \ref{step1a}. We now prove proposition \ref{step1a}.
  
 For any $j$, there is a diagram with split--exact rows
  \begin{equation}\label{diag0}  \begin{array}[c]{ccccccc}
     0\to & A^{j-2}(Z) & \xrightarrow{\alpha} & A^{j-1}(E)\oplus A^j(V) & \xrightarrow{\beta} & A^j(Y) & \to 0\\
          & & &\ \ \ \ \ \ \ \ \ \  \uparrow{\scriptstyle (\psi,\phi^\ast)} &&\ \ \ \  \ \ \ \uparrow{\scriptstyle \tau } & \\
          0\to & A^{j-3}(Z^\prime) & \xrightarrow{\alpha^\prime} & A^{j-1}(E)\oplus A^j(V^\prime) & \xrightarrow{\beta^\prime} & A^j(Y) & \to 0\\
          \end{array}\end{equation}
    Here, the arrow labelled $\alpha$ is defined as $(c_1(G)\cdot f^\ast()  ,\tau_\ast)$ where $G$ is the excess normal bundle of the embedding $Z\subset V$ (as defined in \cite[Section 6.7]{F}). A left--inverse to $\alpha$ is given by $(a,b)\mapsto f_\ast(a)$. 
    The arrow labelled $\beta$ is defined as $i_\ast-(\phi_1)^\ast$, where $i\colon E\to Y$ denotes the inclusion morphism.
    The arrow labelled $\alpha^\prime$ is defined as
    $( c_2(G^\prime)\cdot (f^\prime)^\ast(), (\tau^\prime)_\ast)$, where $G^\prime$ is the excess normal bundle of the embedding $Z^\prime\subset V^\prime$. A left--inverse to $\alpha^\prime$ is given by $(c,d)\mapsto (f^\prime)_\ast(c)$. 
    The arrow labelled $\beta^\prime$ is defined as $i_\ast-(\phi_2)^\ast$. These are general properties of blow--ups with smooth centers \cite[Proposition 6.7(e)]{F}.
  
  The arrow labelled $\tau$ is defined as $(\phi_1)^\ast(\phi_1)_\ast$. Because of the relation $\phi^\ast=(\phi_1)_\ast(\phi_2)^\ast$, the square
  \begin{equation}\label{simple} \begin{array}[c]{ccc}
    A^j(V) &\xrightarrow{(\phi_1)^\ast}& A^j(Y)\\
    \uparrow{\scriptstyle \phi^\ast}&&\uparrow{\scriptstyle\tau}\\
    A^j(V^\prime)&\xrightarrow{(\phi_2)^\ast}&A^j(Y)\\
    \end{array}\end{equation}
    is commutative. The arrow labelled $\psi$ is defined as $f^\ast f_\ast()\cdot c_1(G)$. The diagram
    \begin{equation}\label{double} \begin{array}[c]{ccc}
        A^{j-1}(E) &\xrightarrow{i_\ast}& A^j(Y)\\
       \ \ \ \ \ \ \ \ \ \ \ \ \ \ \  \uparrow{\scriptstyle f^\ast()\cdot c_1(G)   }&&\ \ \ \  \ \ \uparrow{\scriptstyle (\phi_1)^\ast}\\
        A^{j-2}(Z) &\xrightarrow{(i_Z)_\ast}& A^j(V)\\    
      \ \  \ \  \uparrow{\scriptstyle f_\ast}&&  \ \ \ \ \ \ \uparrow{\scriptstyle (\phi_1)_\ast}\\
        A^{j-1}(E) &\xrightarrow{i_\ast}& A^j(Y)\\
        \end{array}\end{equation}
        is commutative (here $i_Z$ is the inclusion $Z\to V$, and for the upper square we have used \cite[Proposition 6.7(a)]{F}). The commutativity of (\ref{simple}) and (\ref{double}) proves commutativity of diagram (\ref{diag0}).
        
      Since the diagram (\ref{diag0}) is commutative with exact rows, there exists a map $\gamma$ making the diagram
      \begin{equation}\label{diag}      \begin{array}[c]{ccccccc}
     0\to & A^{j-2}(Z) & \xrightarrow{} & A^{j-1}(E)\oplus A^j(V) & \xrightarrow{} & A^j(Y) & \to 0\\
          &\ \ \ \  \uparrow {\scriptstyle \gamma} & & \ \ \ \ \ \  \uparrow  {\scriptstyle (\psi,\phi^\ast)}&& \ \ \ \ \uparrow {\scriptstyle \tau}& \\
          0\to & A^{j-3}(Z^\prime) & \xrightarrow{} & A^{j-1}(E)\oplus A^j(V^\prime) & \xrightarrow{} & A^j(Y) & \to 0\\
          \end{array}\end{equation}
          commute. Applying the snake lemma to diagram (\ref{diag}), we find an exact sequence

  \begin{equation}\label{snake}  \ker\psi\oplus\ker(\phi^\ast)\xrightarrow{g} \ker\tau \to\ \coker\gamma \xrightarrow{(h_1,h_2)} \coker\psi\oplus\coker(\phi^\ast) 
   \xrightarrow{k} \coker\tau \end{equation}

We now state some lemmas about the arrows in (\ref{snake}):

\begin{lemma}\label{l1} The arrow labelled $g$ in (\ref{snake}) is surjective.
\end{lemma}

\begin{proof} Let $c$ be an element in $\ker\tau$, i.e. $c\in A^j(Y)$ with $(\phi_1)^\ast(\phi_1)_\ast(c)=0$. As $(\phi_1)^\ast$ is injective, we must have $(\phi_1)_\ast(c)=0$, and so (as $c$ restricts to $0$ in $A^j(Y\setminus E)$) the element $c$ comes from an element $d\in A^{j-1}(E)$. The element $d$ can be written in a unique way as
  \[ d=d_1+d_2\ \ \in A^{j-1}(E)\ ,\]
  where $d_1=f^\ast(b_1)$ for $b_1\in A^{j-1}(Z)$, and $d_2=f^\ast(b_2)\cdot c_1(G)$ for $b_2\in A^{j-2}(Z)$.
  Using the commutativity of diagram (\ref{double}), we find that
  \[ i_\ast \bigl( f^\ast f_\ast(d)\cdot c_1(G)\bigr) = (\phi_1)^\ast(\phi_1)_\ast (c)=0\ \ \ \hbox{in}\ A^j(Y)\ .\]
  On the other hand, we have
  \[  f^\ast f_\ast(d)\cdot c_1(G) =  f^\ast f_\ast(d_2)\cdot c_1(G)=   f^\ast (b_2)\cdot c_1(G) =  d_2\ \ \ \hbox{in}\ A^{j-1}(E)\ \]
  (here, we have used the splitting property $f_\ast(f^\ast(b_2)\cdot c_1(G))=b_2$ in $A^{j-2}(Z)$ mentioned above), 
  and so 
    \[ i_\ast(d_2)=0\ \ \  \hbox{in}\  A^j(Y)\ .\] 
    Thus, we have equality $c=i_\ast(d_1)$ in $A^j(Y)$ and $d_1\in\ker\psi$, proving the arrow $g$ is surjective.
  \end{proof}

\begin{lemma}\label{l2} The arrow labelled $h_1$ in (\ref{snake}) is $0$.
\end{lemma}

\begin{proof} The map
  \[    A^{j-2}(Z)\ \to\ A^{j-1}(E) \]
  in diagram (\ref{diag}) is defined as $f^\ast()\cdot c_1(G)$.
   By definition of $\psi:=f^\ast f_\ast()\cdot c_1(G)$, the image of $A^{j-2}(Z)$ in $A^{j-1}(E)$ is in the image of $\psi$, and so the arrow $h_1$ is $0$.
\end{proof}

\begin{lemma}\label{l3} The arrow labelled $k$ in (\ref{snake}) is $0$ when restricted to $\coker(\phi^\ast)$.
\end{lemma}

\begin{proof} The map 
  \[ A^j(V)\ \to\ A^j(Y)\]
  in diagram (\ref{diag}) is defined as $(\phi_1)^\ast$. But $(\phi_1)^\ast=(\phi_1)^\ast(\phi_1)_\ast(\phi_1)^\ast\colon A^j(V)\to A^j(Y)$, and so
    \[ \ima\Bigl( A^j(V)\xrightarrow{(\phi_1)^\ast}A^j(Y)\Bigr)\ \subset\ \ima\Bigl( A^j(Y)\xrightarrow{(\phi_1)^\ast(\phi_1)_\ast=:\tau} A^j(Y)\Bigr)\ ,\]
   which shows the arrow $k$ is $0$. 
\end{proof}

Applying lemmas \ref{l1}, \ref{l2}, \ref{l3} to the exact sequence (\ref{snake}), we find that the sequence (\ref{snake}) contains an isomorphism
  \begin{equation}\label{cokeriso}  \coker\bigl( A^{j-3}(Z^\prime)\ \xrightarrow{\gamma}\ A^{j-2}(Z)\bigr)\ \xrightarrow{\cong}\   \coker \bigl( A^j(V^\prime)\ \xrightarrow{\phi^\ast}\ A^j(V)\bigr)\ .\end{equation}

   Let us now determine the cokernel of the map $\gamma$:

\begin{lemma}\label{gamma} There exist isomorphisms
   \[ \begin{split}  A^{j-2}(Z)&\cong A^{j-2}(F)\oplus A^{j-3}(F)\oplus A^{j-4}(F)\ ,\\
                              A^{j-3}(Z^\prime)&\cong A^{j-3}(F)\oplus A^{j-4}(F)\ ,\\
                              \end{split}\]
                 such that the map 
                 \[\gamma\colon A^{j-3}(Z^\prime)\to A^{j-2}(Z)\] 
                (defined by diagram (\ref{diag})) 
        sends $A^{j-3}(F)$ isomorphically to $A^{j-3}(F)$, and $A^{j-4}(F)$ isomorphically to $A^{j-4}(F)$.             
   \end{lemma}
   
   \begin{proof} Since $p^\prime\colon Z^\prime\to F$ is a $\PP^1$--bundle, we can write any $a^\prime\in A^{j-3}(Z^\prime)$ uniquely as
   \[ a^\prime = (p^\prime)^\ast (f_{j-3}) + (p^\prime)^\ast (f_{j-4})\cdot h^\prime\ \ \ \hbox{in}\ A^{j-3}(Z^\prime)\ ,\]
   where $f_{k}\in A^{k}(F)$ and $h^\prime\in A^1(Z^\prime)$ denotes the tautological class. This furnishes the second isomorphism required in lemma \ref{gamma}.
   
   We now consider the image of $a^\prime$ under the induced map
   \[ \gamma\colon\ \ A^{j-3}(Z^\prime)\ \to\ A^{j-2}(Z)\ .\]
   By the above description of the maps in the diagram (\ref{diag}) defining $\gamma$, we have that
   \[ \begin{split}  \gamma(a^\prime)&=  f_\ast \Bigl( f^\ast f_\ast  \bigl(c_2(G^\prime)\cdot (f^\prime)^\ast(a^\prime)\bigr) \cdot c_1(G)  \Bigr)  \\
         &= f_\ast \bigl( c_2(G^\prime)\cdot (f^\prime)^\ast(a^\prime)\bigr)\ \ \ \hbox{in}\ A^{j-2}(Z)\ .
         \end{split} \]
         (Here we have used the splitting property $f_\ast(f^\ast(b)\cdot c_1(G))=b$ mentioned above.)
         
  In particular, a cycle of the form $(p^\prime)^\ast (f_{j-3})$ in $A^{j-3}(Z^\prime)$ is mapped to
   \[ \begin{split} \gamma\bigl((p^\prime)^\ast (f_{j-3})\bigr)&=    f_\ast \Bigl( c_2(G^\prime)\cdot (f^\prime)^\ast (p^\prime)^\ast (f_{j-3}) \Bigr)    \\
                                                                                & =  f_\ast \Bigl( c_2(G^\prime)\cdot f^\ast p^\ast (f_{j-3})\Bigr) \\
                                                                                & =     f_\ast c_2(G^\prime)           \cdot p^\ast (f_{j-3})\ \ \ \hbox{in}\ A^{j-2}(Z)\ .\\
                                                                                \end{split}\]
                                                                     (Here, we have used the ``Moreover'' part of proposition \ref{gs}, plus the projection formula.)
                                                                     
                      Likewise, a cycle of the form $(p^\prime)^\ast (f_{j-4})\cdot h^\prime$ in $A^{j-3}(Z^\prime)$ is mapped to
                      \[ \begin{split} \gamma\bigl((p^\prime)^\ast (f_{j-4})\cdot h^\prime\bigr)&=    f_\ast \Bigl( c_2(G^\prime)\cdot (f^\prime)^\ast(h^\prime)\cdot (f^\prime)^\ast (p^\prime)^\ast 
                                                                                      (f_{j-4}) \Bigr)    \\
                                                                                & =  f_\ast \Bigl( c_2(G^\prime)\cdot (f^\prime)^\ast(h^\prime)\cdot f^\ast p^\ast (f_{j-4})\Bigr) \\
                                                                                & =     f_\ast \bigl(c_2(G^\prime)\cdot (f^\prime)^\ast(h^\prime)\bigr)           \cdot p^\ast (f_{j-4})\ \ \ \hbox{in}\ A^{j-2}(Z)\ .\\
                                                                                \end{split}\]                                                                     
                
              Let us now define
              \[ \begin{split}  h_1&:= f_\ast c_2(G^\prime)\ \ \in A^1(Z)\ ,\\
                                        h_2&:= f_\ast \bigl(c_2(G^\prime)\cdot (f^\prime)^\ast(h^\prime)\bigr)  \ \ \in A^2(Z)\ .\\
                                        \end{split}\]
      By what we have just seen, the map $\gamma\colon A^{j-3}(Z^\prime)\to A^{j-2}(Z)$ verifies
       \begin{equation}\label{desc} \begin{split} \gamma\bigl( (p^\prime)^\ast (f_{j-3})\bigr)&=  h_1\cdot p^\ast(f_{j-3})\ ,\\   
                               \gamma\bigl((p^\prime)^\ast (f_{j-4})\cdot h^\prime\bigr) &= h_2\cdot p^\ast (f_{j-4})\ ,\\
                       \end{split}\end{equation}  
                       and this completely determines the map $\gamma$.      
       The isomorphism 
        \[A^{j-2}(Z)\cong A^{j-2}(F)\oplus A^{j-3}(F)\oplus A^{j-4}(F)\] 
        required in lemma \ref{gamma} is now furnished by the following sublemma:
                        
             \begin{sublemma}\label{gamma2} Any $a\in A^{j-2}(Z)$ can be written uniquely as
             \[ a= p^\ast(f_{j-2})+ h_1\cdot p^\ast(f_{j-3}) + h_2\cdot p^\ast(f_{j-4})\ \ \ \hbox{in}\ A^{j-2}(Z)\ ,\]
             where $f_k\in A^k(F)$.
             \end{sublemma}
             
   \begin{proof} First, we claim that $h_1\in A^1(Z), h_2\in A^2(Z)$ have the following property:
     \begin{equation}\label{proj}  \begin{split} p_\ast(h_2)&= [F]\ \ \ \hbox{in}\ A^0(F)\ ,\\
                            p_\ast(h_1\cdot g)&=[F]\ \ \ \hbox{in}\ A^0(F)\ ,\\
                            \end{split}\end{equation}
                     for some $g\in A^1(Z)$.
                     
           To see this, note that
           \[ \begin{split}  p_\ast(h_2)&= p_\ast f_\ast \bigl( c_2(G^\prime)\cdot (f^\prime)^\ast(h^\prime)\bigr)\\
                                                     &= (p^\prime)_\ast (f^\prime)_\ast \bigl( c_2(G^\prime)\cdot (f^\prime)^\ast(h^\prime)\bigr)\\
                                                     &= (p^\prime)_\ast \bigl( (f^\prime)_\ast c_2(G^\prime)\cdot h^\prime\bigr)\\
                                                     &= (p^\prime)_\ast \bigl( [Z^\prime]\cdot h^\prime\bigr) = (p^\prime)_\ast (h^\prime)=[F]\ \ \ \hbox{in}\ A^0(F)\ .\\
                                                     \end{split}\]
               (Here, the first equality is just the definition of $h_2$; the second equality is the ``Moreover'' part of proposition \ref{gs}; the third equality is the projection formula; the fourth equality is
               the fact that -as noted above- $(p^\prime)_\ast$ is a left--inverse to the arrow $\alpha^\prime$.) This proves the first part of the claim.
               
               For the second part of the claim, let $h$ be the tautological class of the $\PP^2$--bundle $p\colon Z\to F$. We can write
               \[ h_1=c_1 h +p^\ast(d)\ \ \ \hbox{in}\ A^1(Z)\ ,\]
               where $c_1\in\QQ$ and $d\in A^1(F)$. Let us suppose for a moment that $c_1$ were $0$, so $h_1=p^\ast(d)$. 
               Then we would have for any $f_{j-3}\in A^{j-3}(F)$ that
               \[ \gamma \bigl((p^\prime)^\ast (f_{j-3})\bigr) = h_1\cdot p^\ast (f_{j-3}) =p^\ast (f_{j-3}\cdot d)\ \ \ \hbox{in}\ A^{j-2}(Z)\ .\]
               In particular, taking $j=m-1$ we have $f_{m-4}\cdot d=0$ (since $\dim F=m-4$), and so this would imply that
                \[  \gamma \bigl((p^\prime)^\ast A^{m-4}(F)\bigr) =0\ .\]
                Since we know that
                \[ \begin{split} A^{m-4}(F)\oplus A^{m-5}(F)\ &\to\ A^{m-4}(Z^\prime)\ ,\\
                                 (f_{m-4},f_{m-5})\ &\mapsto\ (p^\prime)^\ast(f_{m-4})+ (p^\prime)^\ast(f_{m-5})\cdot h^\prime \\
                              \end{split}\]
                       is an isomorphism, this would imply that  
                 \[ \ima \Bigl( A^{m-4}(Z^\prime)\ \xrightarrow{\gamma}\ A^{m-3}(Z)\Bigr) = \ima\Bigl( A^{m-5}(F)\ \xrightarrow{(p^\prime)^\ast()\cdot h^\prime}\ A^{m-4}(Z^\prime)\ \xrightarrow{\gamma}\ A^{m-3}(Z)\Bigr) \ .\]
             In view of the description of $\gamma$ given in (\ref{desc}), this would imply that
             \[     \ima \Bigl( A^{m-4}(Z^\prime)\ \xrightarrow{\gamma}\ A^{m-3}(Z)\Bigr)\ \subset\  \ima \Bigl( A^{m-5}(F)\ \xrightarrow{ h_2\cdot p^\ast()}\  A^{m-3}(Z)\Bigr)\  .\]
                 But then we would have
                 \[ \coker\Bigl( A^{m-4}(Z^\prime)\ \xrightarrow{\gamma}\ A^{m-3}(Z)\Bigr)\not=0\ \]
               (indeed, the map
                 \[  \begin{split}  A^{m-4}(F)\oplus A^{m-5}(F)\ &\to\     A^{m-3}(Z)  \ ,\\    
                                         (   f_{m-4},f_{m-5})\ &\mapsto\  p^\ast(f_{m-4})\cdot h + p^\ast(f_{m-5})\cdot h_2\\
                                      \end{split}\]
                           is an isomorphism, and so any cycle of the form $p^\ast(f_{m-4})\cdot h$ in $A^{m-3}(Z)$ will be in the cokernel of $\gamma$). In view of the isomorphism
                           (\ref{cokeriso}), this would mean that also
                  \[ \coker\Bigl( A^{m-1}(V^\prime)\ \xrightarrow{\phi^\ast}\ A^{m-1}(V)\Bigr)\not=0\ .\]         
               But this is a contradiction: any curve class on $V$ is represented by a cycle supported on the open $V\setminus Z$ (and likewise on $V^\prime$), and so there is an isomorphism
               $ \phi^\ast\colon A^{m-1}(V^\prime) \xrightarrow{\cong} A^{m-1}(V)$. 
         It follows that $c_1\not=0$ and so 
               \[  p_\ast ( h_1\cdot h) = p_\ast ( c_1 h^2) = c_1 [F]\ \ \ \hbox{in}\ A^0(F)\   .\]   
               Setting $g:= {1\over c_1} h$, this proves the second part of the claim.
               
          Sublemma \ref{gamma2} is now readily proven: it follows from the equalities (\ref{proj}) there are relations
          \begin{equation}\label{rel}  \begin{split}   h &= {1\over c_1} h_1 + p^\ast(d)\ \ \ \hbox{in}\ A^1(Z)\ ,\\
                     h^2 &= h_2 + p^\ast(d_{21})\cdot h + p^\ast(d_{22})\ \ \ \hbox{in}\ A^2(Z)\ ,\\
                     \end{split}\end{equation}
                     for some $d, d_{21}\in A^1(F), d_{22}\in A^2(F)$.
                     
           The projective bundle formula implies that any $a\in A^{j-2}(Z)$ can be written as
           \[   a=  p^\ast(f_{j-2}) + h \cdot p^\ast(f_{j-3}) + h^2\cdot   p^\ast(f_{j-4})\ \ \ \hbox{in}\ A^{j-2}(Z)\ ,\]
           where $f_k\in A^k(F)$. Plugging in the relations (\ref{rel}), we find
           \[   \begin{split} a&=  p^\ast(f_{j-2}) + h \cdot p^\ast(f_{j-3}) + h^2\cdot   p^\ast(f_{j-4})\\
                                 &=  p^\ast(f_{j-2}) + ( {1\over c_1} h_1 + p^\ast(d)) \cdot p^\ast(f_{j-3}) + \Bigl(h_2 + p^\ast(d_{21})\cdot ({1\over c_1} h_1 + p^\ast(d) ) + p^\ast(d_{22})\Bigr)\cdot p^\ast( f_{j-4})\\
                                 &=  p^\ast ( f_{j-2}+ d\cdot f_{j-3}+d\cdot d_{21}\cdot f_{j-4}    +d_{22}\cdot f_{j-4}) +  h_1\cdot  p^\ast({1\over c_1}( f_{j-3} + d_{21}\cdot f_{j-4}))  + h_2\cdot p^\ast(f_{j-4})\\
                                 &= p^\ast(f_{j-2}^\prime) +h_1\cdot p^\ast(f_{j-3}^\prime)+h_2\cdot p^\ast(f_{j-4}^\prime)\ \ \ \hbox{in}\ A^{j-2}(Z)\ ,\\
                                 \end{split}\]
                                 for some $f^\prime_k\in A^k(F)$.
                                 
                          It remains to prove unicity in sublemma \ref{gamma2}: suppose $f_k\in A^k(F)$ is such that
                          \[         p^\ast(f_{j-2})+ h_1\cdot p^\ast(f_{j-3}) + h_2\cdot p^\ast(f_{j-4})=0\ \ \ \hbox{in}\ A^{j-2}(Z)\ .\]
                          Then in particular
                          \[ p_\ast \Bigl(  p^\ast(f_{j-2})+ h_1\cdot p^\ast(f_{j-3}) + h_2\cdot p^\ast(f_{j-4})\Bigr)=0 \ \ \ \hbox{in}\ A^{j-4}(F)\ .\]
                          But the left--hand side equals $p_\ast ( h_2\cdot p^\ast(f_{j-4}))=f_{j-4}$ and so $f_{j-4}=0$. Similarly, the assumption implies
                          \[ p_\ast \Bigl( g\cdot \bigl( p^\ast(f_{j-2})+ h_1\cdot p^\ast(f_{j-3}) \bigr)\Bigr)= p_\ast \bigl( g\cdot h_1\cdot p^\ast(f_{j-3})\bigr)= f_{j-3}   =0 \ \ \ \hbox{in}\ A^{j-3}(F)\ \]
               (where we have used the equality (\ref{proj})). Finally, the assumption implies that 
                             \[ p_\ast \bigl( h_2\cdot p^\ast(f_{j-2})\bigr) = p_\ast(h_2)\cdot f_{j-2} = f_{j-2}=0\ \ \ \hbox{in}\ A^{j-2}(F) \]
                      (where we have used again the equality (\ref{proj})), and so we are done. This proves sublemma \ref{gamma2}, and hence lemma \ref{gamma}.            
                   \end{proof}                                 
                                                                                
   \end{proof}

We are now in position to wrap up the proof of proposition \ref{step1a}. 
Combining the isomorphism (\ref{cokeriso}) and lemma \ref{gamma}, we obtain an isomorphism
  \[ \coker \bigl( A^j(V^\prime)\ \xrightarrow{\phi^\ast}\ A^j(V)\bigr)\ \cong\ \coker \gamma\ \cong\ A^{j-2}(F)\ .\]
  This proves proposition \ref{step1a}. Indeed, it follows from this isomorphism of cokernels there is a commutative diagram with exact rows
   \[ \begin{array}[c]{ccccccc}
      0\to & A^{j-3}(Z^\prime) &\xrightarrow{\gamma} & A^{j-2}(Z) & \xrightarrow{\delta} & A^{j-2}(F) & \to 0\\
             &  \downarrow && \downarrow && \downarrow{\scriptstyle\cong} &\\
      0\to &  A^j(V^\prime) &\xrightarrow{\phi^\ast}& A^j(V) &\to & \coker (\phi^\ast) &\to 0\\     
      \end{array}\]
      As we have seen, the upper row is split exact (lemma \ref{gamma}), and a right--inverse to $\delta$ is given by the pull--back $p^\ast$ (sublemma \ref{gamma2}). It follows the lower row is also split and proposition \ref{step1a} is proven.

  
\end{proof}

The second step of the proof of theorem \ref{main2} consists in extending proposition \ref{step1} to a ``universal isomorphism'' of Chow groups:

\begin{proposition}\label{step2} Let $X\subset\PP^{n+1}(\C)$ be a smooth cubic hypersurface, and let $F=F(X)$ be its Fano variety of lines. Let $M$ be any smooth projective variety.
The natural map
       \[         \begin{split}  A^{j-2}(F\times M)\oplus A^j(W\times M)\ &\to\ A^j(X^{[2]}\times M)\\
                                     (a,b)\ &\mapsto\ (\tau\times\hbox{id}_M)_\ast (p\times \hbox{id}_M)^\ast(a) +(\phi\times\hbox{id}_M)^\ast (b)\\
                                     \end{split}\]
  is an isomorphism for all $j$.  
     \end{proposition}
     
 \begin{proof} For any variety $V$, let $V_M$ denote the product $V\times M$. For a morphism $f\colon X\to Y$, let $f_M\colon X_M\to Y_M$ denote the morphism $f\times{\rm id}_M$.
 Proposition \ref{gs} induces a birational map
   \[  \phi_M:=\phi\times\hbox{id}_M\colon\ \ (X^{[2]})_M\ \dashrightarrow\ W_M\ .\]
 Again using proposition \ref{gs}, we find that the map $\phi_M$ admits a resolution of indeterminacy
  \[ \begin{array}[c]{ccc}
     & Y_M &\\
   {\scriptstyle (\phi_1)_M} \swarrow && \searrow {\scriptstyle (\phi_2)_M}\\
    (X^{[2]})_M\ \ \ \ \ & \xdashrightarrow{\phi_M} & \ \ \ \ \ \ W_M\ .\\
    \end{array}\]  
 Here the morphism $(\phi_1)_M$ is the blow--up with codimension $2$ center $Z_M\subset  (X^{[2]})_M$, and the morphism  $(\phi_2)_M$
 is the blow--up with codimension $3$ center $(Z^\prime)_M\subset W_M$.
 Clearly, the exceptional divisor $E_M\subset Y_M$ fits in a commutative diagram
     \[ \begin{array} [c]{ccc}
  & E_M &\\
  {\scriptstyle {f}_M} \swarrow && \searrow {\scriptstyle (f^\prime)_M}\\
    Z_M\ \ \ \ \ \ & & \ \ \ \ \ \ (Z^\prime)_M\ \\
     {\scriptstyle p_M} \searrow && \swarrow {\scriptstyle (p^\prime)_M}\\
     & F_M & \\
     \end{array}\]
   That is, we are in a set--up where we may apply proposition \ref{step1a} (with $V=(X^{[2]})_M$ and $V^\prime=W_M$) , and so proposition \ref{step2} is proven. 
    \end{proof} 

In the third and final step of the proof of theorem \ref{main2}, we relate $F$ and $X^{[2]}$ on the level of Chow motives. 

\begin{proposition}\label{step3} Let $X\subset\PP^{n+1}(\C)$ be a smooth cubic hypersurface, and let $F=F(X)$ be its Fano variety of lines. 
The map
  \[   \Gamma_\tau\circ {}^t \Gamma_p \oplus {}^t\bar{\Gamma_\phi}  \colon\ \ h(F)(2)\oplus h(W)\ \to\ h(X^{[2]}) \ \ \ \hbox{in}\ \MM_{\rm rat}\]
is an isomorphism.
\end{proposition}

\begin{proof}
   This follows from proposition \ref{step2} by virtue of Manin's identity principle \cite[2.3]{Sch}.   
     \end{proof}

Proposition \ref{step3} proves theorem \ref{main2}, since
  \[ h(W)\cong \bigoplus_{i=0}^n h(X)(i)\ \ \ \hbox{in}\ \MM_{\rm rat}\ \]
  (this is the projective bundle formula for the $\PP^n$--bundle $W\to X$). Theorem \ref{main2} immediately implies theorem \ref{main}: 
 if $X$ has finite--dimensional motive (resp. motive of abelian type), then also $X^{[2]}$ has finite--dimensional motive (resp. motive of abelian type); 
 moreover, the property of having finite--dimensional motive (resp. motive of abelian type) is preserved under taking direct summands.
 \end{proof}

\begin{remark}\label{k0} In \cite[Theorem 5.1]{GS}, proposition \ref{gs} is used to establish a relation between a (not necessarily smooth) cubic $X\subset\PP^{n+1}(k)$ and its Fano variety $F:=F(X)$ in the Grothendieck ring of varieties:
  \[ [X^{[2]}]=[\PP^n][X]+\LLL^2[F]\ \ \ \hbox{in}\ K_0(\hbox{Var/k})\ .\]
Theorem \ref{main2} shows that for smooth cubics over $\C$, a similar relation holds on the level of Chow motives.  
  \end{remark}

\section{Examples}

\begin{corollary}\label{ex} Let $F(X)$ be the Fano variety of lines of a smooth cubic $X\subset\PP^{n+1}(\C)$. In the following cases, $F(X)$ has finite--dimensional motive (of abelian type):

\noindent
(\rom1) $n=3$ or $n=5$;

\noindent
(\rom2) $X$ is a Fermat cubic
  \[  x_0^3+x_1^3+\cdots+x_{n+1}^3=0\ ;\]
  
 \noindent
 (\rom3) $n=4$ and $X$ is defined by an equation
   \[  f(x_0,\ldots,x_3)+x_4^3+x_5^3=0\ ,\]
   where $f(x_0,\ldots,x_3)$ defines a smooth cubic surface;
   
  \noindent
  (\rom4) $n=6$ and $X$ is defined by an equation
  \[ f_1(x_0,\ldots,x_3)+f_2(x_4,\ldots,x_7)=0\ ,\]
  where $f_1, f_2$ define smooth cubic surfaces. 
   
\end{corollary}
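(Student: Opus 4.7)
The plan is to reduce each case to a statement about $X$ itself: by Theorem \ref{main}, it suffices to verify that $X$ has finite-dimensional motive of abelian type, and the conclusion for $F(X)$ then follows automatically.

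For case (\rom1) with $n=3$, the smooth cubic threefold is a classical example. Its Chow motive splits into Lefschetz summands together with one interesting summand $h^3(X)$, which is known (up to a Tate twist) to be isomorphic to the $h^1$ of the intermediate Jacobian; by Clemens--Griffiths this Jacobian is a principally polarized abelian variety, and motives of abelian varieties lie in the tensor subcategory generated by motives of curves. For the smooth cubic fivefold ($n=5$), the primitive middle cohomology $H^5(X)$ has Hodge type $(3,2)+(2,3)$, so the intermediate Jacobian $J^3(X)$ is again a principally polarized abelian variety; the corresponding motivic summand $h^5(X)$ is known to be of abelian type via a compatible motivic splitting, a fact I would cite from the existing literature on cubic hypersurfaces of low dimension.

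For cases (\rom2)--(\rom4), I would invoke the classical Shioda--Katsura inductive covering trick for cubics of ``diagonal'' shape. If $X=\{f_1(x)+f_2(y)=0\}\subset\PP^{n+1}$, introduce the auxiliary cubics
\[ Y_1=\{f_1(x)+t^3=0\},\qquad Y_2=\{f_2(y)+s^3=0\} \]
in lower-dimensional projective spaces. The construction yields a dominant rational map from a suitable $\mu_3$-quotient of $Y_1\times Y_2$ onto $X$; since we work with $\QQ$-coefficients, the associated graph correspondence exhibits $h(X)$ as a direct summand of $h(Y_1)\otimes h(Y_2)$ in $\MM_{\rm rat}$. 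Iterating: in case (\rom2) the Fermat cubic of dimension $n$ becomes a summand of a tensor product of motives of the Fermat elliptic curve $\{u^3+v^3+w^3=0\}$; in case (\rom3), $X$ is a summand of the tensor product of a cubic threefold motive with the motive of that elliptic curve; in case (\rom4), $X$ is a summand of the tensor product of two cubic threefold motives. Since case (\rom1) supplies the cubic threefold, and the subcategory of abelian-type motives is closed under tensor products and direct summands, the corollary follows in all cases.

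The main obstacle I expect is the motivic upgrade of the Shioda--Katsura construction: while the birational correspondence is classical, one must carefully verify that after dividing the graph correspondence by its degree one obtains a genuine projector in $\MM_{\rm rat}$ whose image is isomorphic to $h(X)$. This is a standard application of Manin's identity principle, analogous in spirit to the passage from Chow groups to Chow motives in the proof of Theorem \ref{main2}, but it is the step that requires the most explicit bookkeeping.
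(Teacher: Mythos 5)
Your overall strategy coincides with the paper's: reduce via Theorem \ref{main} to showing that $X$ itself has motive of abelian type, and handle the ``diagonal'' cases (\rom2)--(\rom4) by Shioda--Katsura-style dominant rational maps from products of lower-dimensional cubics. The genuine weak point is case (\rom1), above all for $n=5$. There your justification is purely Hodge-theoretic: $H^5(X)$ has level one, so $J^3(X)$ is a principally polarized abelian variety, and you then defer to an unspecified ``compatible motivic splitting'' in the literature. But a statement about Hodge structures does not by itself imply anything about the Chow motive (that implication is, in general, conjectural), so the unspecified citation is precisely the non-trivial content of case (\rom1). The concrete cycle-theoretic input that makes this work --- and the one the paper uses --- is the vanishing $A^j_{AJ}(X)=0$ for all $j$ for smooth cubic three- and fivefolds (\cite{Lew}, and alternatively \cite{Ot}, \cite{HI}), combined with Vial's theorem \cite[Theorem 4]{V2}, which converts this vanishing into the statement that the motive of $X$ is generated by curves, hence of abelian type and finite-dimensional. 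For $n=3$ your alternative route (a Chow-motivic isomorphism $h^3(X)(1)\cong h^1(J(X))$) is indeed citable, but Clemens--Griffiths alone does not produce it; some cycle-level input of the above kind is still required, so even there you should name the actual reference rather than the Hodge-theoretic heuristic.

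For (\rom2)--(\rom4) your route is the paper's, with one bookkeeping correction: a dominant generically finite rational map $\phi\colon Y_1\times Y_2\dashrightarrow X$ does not directly exhibit $h(X)$ as a direct summand of $h(Y_1)\otimes h(Y_2)$, since $\bar{\Gamma_\phi}\circ{}^t\bar{\Gamma_\phi}$ may differ from $(\deg\phi)\,\Delta_X$ by terms supported over the indeterminacy locus. The clean statement, and the one the paper uses, is that $X$ is dominated by a genuine morphism from the blow-up of $Y_1\times Y_2$ along the indeterminacy locus, which in cases (\rom3) and (\rom4) is a union, respectively a product, of smooth cubic surfaces; by the blow-up formula this blow-up still has motive of abelian type, and for a generically finite surjective morphism $f$ one has $\Gamma_f\circ{}^t\Gamma_f=(\deg f)\,\Delta_X$, so $h(X)$ is a direct summand of the motive of the blow-up. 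This sidesteps exactly the projector bookkeeping you flag as the main obstacle (and is not an application of Manin's identity principle, which plays no role here). With case (\rom1) repaired as above, the rest of your argument goes through and matches the paper.
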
   

\begin{proof} Appealing to theorem \ref{main}, it suffices to check $X$ has motive of abelian type. In case (\rom2), this is well--known (it follows from the inductive structure of Fermat varieties \cite{Sh}). In case (\rom1), we have
  \[ A^j_{AJ}(X)=0\ \ \hbox{for\ all\ }j \]
  (this is proven in \cite{Lew}, and alternatively in \cite{Ot} and \cite{HI}). This implies the motive of $X$ is generated by curves \cite[Theorem 4]{V2}.
  
  In case (\rom3), the argument is a combination of (\rom1) and (\rom2): Let $X$ be a cubic fourfold as in (\rom3). There is a (Shioda--style) rational map
    \[ \phi\colon\ \ Y\times C\ \dashrightarrow\ X\ ,\]
    where $C$ is a cubic Fermat curve and $Y$ the cubic threefold defined by an equation
    \[  f(x_0,\ldots,x_3)+x_4^3=0\ .\]
    The indeterminacy locus $S$ of $\phi$ is a union of smooth cubic surfaces, and $X$ is dominated by the blow--up of $Y\times C$ with center $S$ (these assertions are proven just as \cite[Theorem 2]{Sh}). This blow--up has motive of abelian type.
    
  The argument for case (\rom4) is similar: there is a (Shioda--style) rational map
  \[ \phi\colon\ \ X_1\times X_2\ \dashrightarrow\ X\ ,\]
  where $X_1, X_2$ are the cubic threefolds defined by the equation
   \[ f_1(x_0,\ldots,x_3)+z_1^3=0\ ,\]
   resp.
   \[ f_2(x_4,\ldots,x_7)+z_2^3=0\ .\]
   The indeterminacy locus $S$ of $\phi$ is a product of two smooth cubic surfaces, and $X$ is dominated by the blow--up of $X_1\times X_2$ with center $S$. Since smooth cubic surfaces and threefolds have motive of abelian type, $X$ has motive of abelian type. 
     \end{proof}
    
\begin{remark} Cubic fourfolds as in corollary \ref{ex}(\rom3) appear in \cite[Example 4.2]{V9}. As shown in loc. cit., to such a fourfold $X$ one can associate a $K3$ surface $S_X$ with the property that there is a correspondence inducing an isomorphism
  \[ A_0^{hom}(S_X)\cong A_1^{alg}(X)\ .\]
  These $K3$ surfaces $S_X$ form a $4$--dimensional family of double covers of $\PP^2$ ramified along a sextic.
    
  The example of corollary \ref{ex}(\rom3) is generalized in \cite{excubic}, where it is shown that smooth cubic fourfolds of type
    \[ f(x_0,\ldots,x_4)+x_5^3=0\ \]
    have finite--dimensional motive. The argument is more involved.
\end{remark}


\begin{acknowledgements} This note is a protracted after--effect of the Strasbourg 2014---2015 groupe de travail based on the monograph \cite{Vo}; thanks to all the participants for the pleasant and stimulating atmosphere. 
Many thanks to Yasuyo, Kai and Len for not being there when I work, and for being there when I don't.
\end{acknowledgements}


\end{document}